\newtheorem{theorem}{Theorem}[section]
\newtheorem{lemma}[theorem]{Lemma}
\theoremstyle{definition}
\newtheorem{remark}[theorem]{Remark}
\newcommand{\Aut}{\operatorname{Aut}}
\theoremstyle{remark}
\begin{document}
\baselineskip 17 pt

\title[Stable tameness of automorphisms of $F\langle x,y,z\rangle$ fixing $z$]
{Stable tameness of automorphisms of $F\langle x,y,z\rangle$ fixing $z$}

\author[A.Belov-Kanel and Jie-Tai Yu]
{Alexei Belov-Kanel and Jie-Tai Yu}
\address{Department of Mathematics,
Bar-Ilan University Ramat-Gan, 52900 Israel} \email{beloval@cs.biu.ac.il,\ kanelster@gmail.com}
\address{Department of Mathematics, The University of Hong
Kong, Hong Kong SAR, China} \email{yujt@hkusua.hku.hk,\
yujietai@yahoo.com}


\thanks{The research of Jie-Tai Yu was partially
supported by an RGC-GRF Grant.}

\subjclass[2000] {Primary 13S10, 16S10. Secondary 13F20, 13W20,
14R10, 16W20, 16Z05.}

\keywords{Automorphisms, coordinates,
 polynomial
algebras,  free associative algebras, stably tameness,
lifting problem.}

\maketitle

\noindent $\mathbf{Abstract.}$\ It is proved that
every $z$-automorphism ($z$-coordinates, respectively) of the free associative algebra $F\langle x,y,z\rangle$ over an arbitrary field $F$ is stably tame.


\smallskip

\section{\bf Introduction and main results}

\smallskip

\noindent An $F$-automorphism of a free associative algebra
$F\langle x_1,\dots,x_n\rangle$ (a polynomial algebra $F[x_1,\dots,x_n]$)
is {\it elementary} if it fixes all variables except one. An $F$-automorphism
is {\it tame} if it is product of elementary automorphisms. An $F$-automorphisms
$(f_1,\dots,f_n)$ of $F\langle x_1,\dots,x_n\rangle$ ($F[x_1,\dots,x_n]$)
is {\it stably tame} if there exists a nonnegative integer $m$,
the automorphism $(f_1,\dots,f_n,x_{n+1},\dots,x_{n+m})$
of $F\langle x_1,\dots,x_{n+m}\rangle$\

\noindent ($F[x_1,\dots,x_{n+m}]$) is tame.

\

\noindent From now on all automorphisms are $F$-automorphisms.

\

\noindent Whether every automorphism of a free associative algebra
(polynomial algebra) is stably tame, is a long-standing and interesting
open question.

\

\noindent In \cite{S}, stable tameness of some special kind of automorphisms
of polynomial and free associative algebras were obtained. In \cite{BEW},
it is proved that every fixing $z$ automorphism of the polynomial algebra
$F[x,y,z]$ over a field $F$ of characteristic zero is stably tame, among other things. It is the first big step
for attacking the stably tameness problem.

\

\noindent In  this paper, based our recent result of the lifting problem,
we prove the following

\begin{theorem} \label{stablytame}
Every fixing $z$ automorphism of the free associative algebra
$F\langle x,y,z\rangle$ over an arbitrary field $F$ is
stably tame and becomes tame after adding one variable.
\end{theorem}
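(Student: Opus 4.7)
The plan is to abelianize the problem, resolve it in the commutative setting by way of \cite{BEW}, and then lift the factorization back into the free algebra using the authors' recent lifting result. Write $\pi:F\langle x,y,z\rangle\twoheadrightarrow F[x,y,z]$ for the canonical surjection; a $z$-automorphism $\phi$ of the free algebra descends to a $z$-automorphism $\bar\phi$ of the polynomial algebra.

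The first step is to establish that $\bar\phi$ is stably tame in $F[x,y,z]$. When $\mathrm{char}\,F=0$ this is precisely \cite{BEW}. For general $F$, I would first compose $\phi$ with tame $z$-automorphisms to reduce to the case $\bar\phi\equiv\mathrm{id}\pmod{z}$: the reduction of $\bar\phi$ modulo $(z)$ is an automorphism of $F[x,y]$, hence tame by Jung--van der Kulk, and any tame planar automorphism lifts as a tame $z$-automorphism in dimension three. For such $z$-unipotent $\bar\phi$, the arguments of \cite{BEW} transport to arbitrary characteristic, since they rest on elementary manipulations of polynomial coefficients regarded as functions of $x,y$ with parameter $z$ and require no denominators.

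The second and harder step is the lifting. Given a tame factorization of the stabilization of $\bar\phi$ in a polynomial algebra on four variables, I would lift each elementary polynomial factor to an elementary free automorphism, producing a free $z$-automorphism $\psi$ with $\bar\psi=\bar\phi$. Then $\phi\psi^{-1}$ is an \emph{IA} $z$-automorphism, i.e.\ one whose abelianization is the identity. The authors' recent lifting result should then show that such an IA $z$-automorphism of $F\langle x,y,z\rangle$ is tame after adjoining a single free variable; composing with the stably tame $\psi$ yields the theorem, with the same one extra variable sufficing for the whole composition.

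The main obstacle is the lifting step, precisely the tameness of IA $z$-automorphisms. Although abelianization is a homomorphism, it has a large kernel in the noncommutative setting, and an IA $z$-automorphism can be quite intricate, carrying commutator corrections at every level of its description. Controlling these commutator corrections systematically and absorbing them into a single additional elementary step on the extra variable is the technical heart of the lifting result, and is what accounts for the sharp ``after adding one variable'' bound asserted in the theorem.
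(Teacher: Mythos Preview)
Your strategy diverges sharply from the paper's, and the divergence is not cosmetic: your reduction lands on a subproblem---stable tameness of IA $z$-automorphisms of $F\langle x,y,z\rangle$---that is neither easier than the original statement nor addressed by any result in the paper or its references. The ``lifting result'' of \cite{BKY} that you invoke is in fact the decomposition recorded here as Theorem~\ref{decomposation}: every $z$-automorphism of $F\langle x,y,z\rangle$ factors as a product of (i) $F[z]$-linear automorphisms and (ii) conjugates of elementary $z$-automorphisms by $F(z)$-linear ones. It says nothing about IA automorphisms. Once you have isolated the IA factor $\phi\psi^{-1}$, you have isolated precisely the noncommutative content of the problem---the commutator corrections that abelianization kills---and you offer no mechanism for dissolving it; your final paragraph effectively concedes this. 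The characteristic-zero hypothesis in \cite{BEW} that you try to remove by hand is a further loose end, as is the bookkeeping needed to make a single extra variable serve both the commutative stabilization and the IA step, but these are secondary to the structural gap.

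The paper avoids abelianization and \cite{BEW} entirely. Starting from the decomposition of Theorem~\ref{decomposation}, factors of type (i) are stably tame with one added variable by \cite{DY}. Factors of type (ii) are first pinned down (Lemmas~\ref{LmCoprdPrdIntsct} and \ref{noncommcon}) to the explicit shape $x\mapsto x+bh(ax+by)$, $y\mapsto y-ah(ax+by)$ with $a,b\in F[z_l,z_r]$ and $h\in F\langle z,t\rangle$, after which a four-term Smith-type identity (Lemma~\ref{conjstablytame}) exhibits each such factor as a product of elementary automorphisms upon adjoining a single variable $t$. No passage through $F[x,y,z]$ and no IA reduction is needed; this is also why the result holds uniformly over an arbitrary field.
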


\noindent A polynomial $f\in F\langle x_1,\dots,x_n\rangle$ is a {\it coordinate}
if $(f,f_2,\dots,f_n)$ is an automorphism for some $f_2,\dots,f_n\in F\langle x_1,\dots,x_n\rangle$.
The coordinate $f$ is stably tame if $(f,f_2,\dots,f_n)$ is stably tame. A coordinate $f$
is a {\it $x_n$-coordinate} if there exists an automorphism
$(f,f_2,\dots,f_{n-1},x_n)$.

\noindent As a direct consequence of Theorem \ref{stablytame}, we obtain

\begin{theorem} Every $z$-coordinate of $F\langle x,y,z\rangle$ is stably tame.
\end{theorem}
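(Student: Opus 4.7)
The plan is to derive this corollary as a direct translation of Theorem~\ref{stablytame}. Let $f\in F\langle x,y,z\rangle$ be a $z$-coordinate; by the definition recalled just above, this means there exists some $f_2\in F\langle x,y,z\rangle$ such that $\Phi:=(f,f_2,z)$ is an automorphism of $F\langle x,y,z\rangle$. Because the third component of $\Phi$ is literally $z$, the automorphism $\Phi$ fixes $z$ and hence is precisely of the type to which Theorem~\ref{stablytame} applies.

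Next I would invoke Theorem~\ref{stablytame} on $\Phi$, which yields that $\Phi$ is stably tame---in fact tame after adjoining a single extra variable $w$, so that $(f,f_2,z,w)$ becomes a tame automorphism of $F\langle x,y,z,w\rangle$. By the definition of stable tameness of a coordinate recorded just above, stable tameness of the completed automorphism $\Phi$ is exactly what is needed to conclude that $f$ itself is a stably tame coordinate.

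The only conceptual point to verify is that no change of the second component is required: the very $f_2$ that witnesses $f$ as a $z$-coordinate already produces a $z$-fixing automorphism, so there is no mismatch between the hypothesis of Theorem~\ref{stablytame} and the data supplied by the definition of $z$-coordinate. All the substantive difficulty is contained in Theorem~\ref{stablytame}; the corollary is obtained by pure unwinding of definitions, so there is no genuine obstacle to overcome here.
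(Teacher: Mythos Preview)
Your argument is correct and is exactly the intended one: the paper itself presents this theorem as a direct consequence of Theorem~\ref{stablytame}, obtained by unwinding the definitions of $z$-coordinate and stably tame coordinate precisely as you do. There is nothing to add or change.
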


\

\section{\bf Proofs}

\noindent To prove Theorem \ref{stablytame}, we only need
to prove the following

\begin{theorem} \label{coproduct}
For every automorphism $(f,g)$ in
$\Aut_{F[z]}F\langle x,y,z\rangle$,
$(f,g,t)$ is a tame automorphism in
$\Aut_{F[z]}F\langle x,y,z,t\rangle$.
\end{theorem}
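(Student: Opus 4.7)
The plan is to attack the theorem in two stages. First, reduce modulo $z$: use the Czerniakiewicz--Makar-Limanov theorem on $\Aut F\langle x,y\rangle$ to peel off the ``semisimple part'' of $(f,g)$, so that the remaining automorphism is congruent to the identity modulo $z$. Second, handle this residual ``unipotent'' case by combining the known stable tameness of its commutative abelianization with the authors' recent lifting theorem quoted in the introduction.

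For the reduction step, consider the projection $\pi\colon F\langle x,y,z\rangle\twoheadrightarrow F\langle x,y\rangle$ killing $z$. For $(f,g)\in\Aut_{F[z]}F\langle x,y,z\rangle$, the pair $(\pi f,\pi g)$ is an $F$-automorphism of $F\langle x,y\rangle$, hence tame by Czerniakiewicz--Makar-Limanov. Given a decomposition $(\pi f,\pi g)=E_1\cdots E_k$ into elementary $F$-automorphisms, I would lift each $E_i$ verbatim (view its defining polynomial inside $F\langle x,y,z\rangle$) to an elementary $F[z]$-automorphism $\widetilde E_i$ of $F\langle x,y,z\rangle$; the composite $\tau=\widetilde E_1\cdots\widetilde E_k$ is then a tame $F[z]$-automorphism with $\pi\tau=\pi(f,g)$. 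Extending $\tau$ by $t\mapsto t$ preserves tameness, so after replacing $(f,g)$ by $\tau^{-1}\circ(f,g)$ it suffices to treat the case where $f=x+za$ and $g=y+zb$ for some $a,b\in F\langle x,y,z\rangle$.

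For the residual unipotent case, I would abelianize: the image of $(f,g,z)$ in $\Aut_{F[z]}F[x,y,z]$ is a $z$-automorphism congruent to the identity modulo $z$, a setting where stable tameness with one added variable is available (via the constructions of \cite{BEW} and their later refinements, whose mod-$z$-identity arguments do not involve characteristic-sensitive obstructions). Fix such a commutative stable-tameness factorization of the abelianized automorphism over $F[z,t]$. The authors' lifting theorem then replaces each elementary commutative move by an elementary $F[z]$-move of $F\langle x,y,z,t\rangle$, and the composite of these noncommutative moves reproduces $(f,g,t)$ exactly.

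The principal difficulty is precisely this last lifting step. A single commutative elementary substitution admits many inequivalent noncommutative lifts because $x$, $y$, $t$ do not commute, and the liftings must be coordinated so that the full composite realizes the intended noncommutative automorphism rather than some reordering of it; preserving the requirement that $z$ is fixed (and that each lift remains elementary in the free-associative sense) adds a further constraint. The authors' lifting result is designed exactly to secure this compatibility, and once invoked the theorem follows with the single auxiliary variable $t$ as claimed.
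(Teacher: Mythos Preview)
Your strategy diverges from the paper's and contains a genuine gap at the lifting step.

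The paper never abelianizes. Instead it invokes the structural result from \cite{BKY} (quoted here as Theorem~\ref{decomposation}): every $(f,g)\in\Aut_{F[z]}F\langle x,y,z\rangle$ factors as a product of (i) $F[z]$-linear automorphisms and (ii) conjugates $\psi\varphi\psi^{-1}$ with $\varphi$ elementary over $F[z]$ and $\psi$ linear over $F(z)$. Type~(i) factors become tame with one added variable by \cite{DY}. For type~(ii), Lemmas~\ref{LmCoprdPrdIntsct} and~\ref{noncommcon} force the explicit shape $x\mapsto x+bh(ax+by)$, $y\mapsto y-ah(ax+by)$ with $a,b\in F[z_l,z_r]$, and Lemma~\ref{conjstablytame} supplies a concrete Smith-type four-term factorization after adjoining $t$. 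No commutative detour is taken, and nothing depends on the characteristic of $F$.

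The gap in your plan is the sentence ``the composite of these noncommutative moves reproduces $(f,g,t)$ exactly.'' What you call the authors' lifting result is precisely the decomposition theorem above; it does \emph{not} assert that a commutative tame factorization can be lifted term by term to recover a prescribed noncommutative automorphism. Each commutative elementary move has infinitely many noncommutative lifts, and composing any selection of lifts produces \emph{some} $z$-automorphism of $F\langle x,y,z,t\rangle$ with the right abelianization; but the abelianization map $\Aut_{F[z]}F\langle x,y,z,t\rangle\to\Aut_{F[z]}F[x,y,z,t]$ is far from injective, so nothing forces that composite to equal $(f,g,t)$. Equivalently, your argument would still need to show that every $z$-automorphism abelianizing to the identity is tame after adding $t$, which is essentially the original problem again. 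You flag this as ``the principal difficulty'' and then assert that the lifting result handles it; it does not, and no statement of that kind is available in \cite{BKY}. A secondary issue is your appeal to \cite{BEW}, which the introduction records as a characteristic-zero theorem; your claim that the mod-$z$-identity case is characteristic-free would itself require proof.
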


\noindent To prove Theorem \ref{coproduct}, we need some preliminaries.

\begin{theorem}\label{decomposation}
An automorphism $(f,\ g)$ in $\Aut_{F[z]} F\langle x,\ y,\ z\rangle$,
can be canonically decomposed as product of the following type
of automorphisms:
i) Linear automorphisms in $\Aut_{F[z]}F\langle x,y,z\rangle$;\
ii) Automorphisms which can be obtained by an
elementary automorphism in
$$\Aut_{F[z]}F\langle x,y,z\rangle,$$ conjugated by a linear automorphism
in $$\Aut_{F(z)}F(z)*_FF\langle x,y\rangle$$
\end{theorem}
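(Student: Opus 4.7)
The plan is to extend the given automorphism $(f,g)\in\Aut_{F[z]}F\langle x,y,z\rangle$ to an automorphism of the coproduct $R:=F(z)*_FF\langle x,y\rangle$ over $F(z)$, and then to establish and apply a Czerniakiewicz--Makar-Limanov type tameness theorem inside $\Aut_{F(z)}R$. Since $F\langle x,y,z\rangle=F[z]*_FF\langle x,y\rangle$ embeds in $R$ and $F(z)$ is the fraction field of $F[z]$, any automorphism fixing $z$ extends uniquely to an automorphism of $R$ fixing $F(z)$ pointwise. This reduces the structural question about $\Aut_{F[z]}F\langle x,y,z\rangle$ to a tameness question over the field $F(z)$.

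The core step is to prove that every automorphism in $\Aut_{F(z)}R$ factors as a product of $F(z)$-linear automorphisms and elementary automorphisms (each fixing one of $x,y$ and sending the other to itself plus an element of the subalgebra generated by the fixed generator and $F(z)$). Given such a factorization of the extension $\tilde\phi$, I would rearrange and collect factors so that the $F(z)$-linear pieces are pulled to one side as much as possible, leaving each elementary factor in the conjugated form $LEL^{-1}$ with $E$ defined over $F[z]$; the remaining outermost linear factor, together with each such conjugate, will turn out to preserve $F\langle x,y,z\rangle$ and fix $z$. This yields exactly the canonical decomposition in the theorem, with factors of types (i) and (ii).

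The main obstacle is establishing the tameness result for $\Aut_{F(z)}R$. The classical Czerniakiewicz--Makar-Limanov proof for $\Aut_F F\langle x,y\rangle$ uses a filtration on the free algebra and inducts on the leading form of the image pair to peel off elementary factors one at a time. In $R$, the subfield $F(z)$ does not commute with $x,y$, so elements have an alternating normal form instead of the usual polynomial form; the appropriate complexity measure must combine the alternating length of the normal form with the degrees of the $F\langle x,y\rangle$-pieces. The argument will then show that whenever $(\tilde f,\tilde g)$ is not already linear, its complexity can be strictly decreased by postcomposition with a suitable elementary or linear automorphism. Canonicity of the resulting decomposition will follow from uniqueness of the alternating normal form together with the determinism of the reduction algorithm.
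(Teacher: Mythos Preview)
The paper does not actually prove this theorem: its entire proof reads ``It is Theorem 3.4 in \cite{BKY}.'' So there is no in-paper argument to compare against; your proposal is an outline of how one might prove the cited result itself.

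Your strategy---pass to $R=F(z)*_FF\langle x,y\rangle$, establish a Czerniakiewicz--Makar-Limanov type tameness statement for $\Aut_{F(z)}R$, then descend---is indeed the framework used in \cite{BKY}, so the broad plan is sound. Two points, however, are underweighted in your sketch.

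First, the tameness step is harder than the classical case. The algebra $R$ is \emph{not} the free associative algebra $F(z)\langle x,y\rangle$; the variable $z$ does not commute with $x,y$, so elements have alternating normal forms and ``linear'' automorphisms involve both left and right $z$-multiplication (this is why the paper works with $F[z_l,z_r]$). You acknowledge this, but your proposed complexity measure (alternating length plus degrees of the $F\langle x,y\rangle$-pieces) needs a genuine argument that an elementary or linear move strictly decreases it; the classical bidegree tricks do not transfer automatically.

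Second, and more seriously, the descent step is where the real content lies, and your sketch essentially assumes it. Once you have a factorization in $\Aut_{F(z)}R$ into $F(z)$-linear and $F(z)$-elementary pieces, you assert that after regrouping into blocks $LEL^{-1}$ the elementary $E$ can be taken over $F[z]$ and each block ``will turn out to preserve $F\langle x,y,z\rangle$.'' But an elementary automorphism in $\Aut_{F(z)}R$ sends, say, $y\mapsto y+p$ with $p\in F(z)*F[x]$; there is no a priori reason a single linear conjugation clears all the $F(z)$-denominators in $p$, nor that the individual blocks (as opposed to their full product) preserve $F\langle x,y,z\rangle$. In \cite{BKY} this is handled by running the degree-reduction algorithm while tracking integrality: because the input $(f,g)$ lies over $F[z]$, one shows at each stage that the highest homogeneous parts force the next reduction to be of the specific conjugated form in (ii), with the inner elementary already defined over $F[z]$. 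That inductive control of denominators is the theorem, not a consequence of tameness over $F(z)$ alone; your outline would need to supply it.
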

\begin{proof} It is Theorem 3.4 in \cite{BKY}.
\end{proof}

\begin{lemma}          \label{LmCoprdPrdIntsct}
Suppose a polynomial $f\in F\langle x,y,z\rangle$ is neither a left multiple nor a right multiple of any nontrivial polynomial in $F[z]\backslash F$. Then $(F(z)*F[f])\cap(F\langle x,y,z\rangle)=F(z)*F[f]$.
\end{lemma}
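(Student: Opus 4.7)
The plan is to work inside the ambient algebra $R:=F(z)*_{F}F\langle x,y\rangle$ from Theorem~\ref{decomposation}. I read the displayed identity as $(F(z)*F[f])\cap F\langle x,y,z\rangle=F[z]*F[f]$, since the right-hand side as printed would force the absurd containment $F(z)\subseteq F\langle x,y,z\rangle$. The inclusion $\supseteq$ is immediate from $F[z],F[f]\subseteq F\langle x,y,z\rangle$ (together with the standard verification, to be made part of the proof, that the natural map from the abstract free product $F[z]*_{F}F[f]$ into $F\langle x,y,z\rangle$ is injective under our hypothesis). The substance is the reverse inclusion.

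First I fix an $F$-linear complement $U$ of $F[z]$ in $F(z)$ (for instance the span of proper partial fractions $p/q^{k}$ with $\deg p<\deg q$ and $q$ irreducible), so that $F(z)=F[z]\oplus U$. Free-product normal form then gives an $F$-basis of $R$ consisting of $1$ together with alternating words whose $F(z)$-factors lie in $F(z)^{+}=zF[z]\oplus U$ and whose $F\langle x,y\rangle$-factors lie in a chosen complement of $F$ in $F\langle x,y\rangle$. The subalgebra $F\langle x,y,z\rangle=F[z]*_{F}F\langle x,y\rangle$ sits inside $R$ exactly as the $F$-span of those basis words all of whose $F(z)$-factors lie in $zF[z]$, so $R=F\langle x,y,z\rangle\oplus W$ where $W$ is spanned by alternating words carrying at least one $U$-factor. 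In parallel, any $h\in F(z)*F[f]$ admits a free-product normal form $h=\sum_{i}c_{i}\alpha_{i,0}f^{k_{i,1}}\alpha_{i,1}\cdots f^{k_{i,n_{i}}}\alpha_{i,n_{i}}$ with interior $\alpha_{i,j}\in F(z)^{+}$ and each $k_{i,\ell}\geq 1$; the conclusion to prove is that if the $R$-expansion of $h$ has no $W$-part, then every $\alpha_{i,j}$ lies in $F[z]$.

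The decisive step is to expand each $f^{k_{i,\ell}}$ in the $F[z]*_{F}F\langle x,y\rangle$-normal form and to track how the surrounding $\alpha_{i,j}$'s either stay isolated or fuse with an adjacent $F[z]^{+}$-factor of $f^{k_{i,\ell}}$ under multiplication in $R$. Here the hypothesis is essential: because $f$ is not a left multiple of any $p\in F[z]\setminus F$, the normal form of $f$ contains a summand whose leftmost factor lies in $F\langle x,y\rangle^{+}$; symmetrically, not being a right multiple yields a summand whose rightmost factor lies in $F\langle x,y\rangle^{+}$. These ``buffer'' summands persist in every $f^{k}$ for $k\geq 1$. Using the buffer summands of $f^{k_{i,\ell}}$ one can extract, from the $R$-expansion of a given reduced monomial $\alpha_{i,0}f^{k_{i,1}}\cdots\alpha_{i,n_{i}}$, a canonical summand in which each $\alpha_{i,j}$ stands unmerged in its own position of a distinct $R$-basis word.

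Since these canonical summands lie in pairwise distinct $R$-basis words for distinct reduced monomials, the $U$-parts of the various $\alpha_{i,j}$'s cannot cancel against one another; the hypothesis $h\in F\langle x,y,z\rangle$ then forces each $\alpha_{i,j}\in F[z]$, giving $h\in F[z]*F[f]$. I expect the main obstacle to be making this buffer extraction rigorous, i.e.\ constructing an explicit $F$-linear projection of $R$ that realises the separation above and checking that the canonical summands are indeed linearly independent in $R$. Absent the non-divisibility hypothesis, $f^{k}$ could be concentrated in summands whose leftmost (or rightmost) factor lies in $F[z]^{+}$, so adjacent $\alpha$'s would fuse with $F[z]$-material and $U$-contributions from distinct reduced monomials could conspire to cancel---which is precisely what the hypothesis rules out.
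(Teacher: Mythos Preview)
Your correction of the printed right-hand side to $F[z]*F[f]=F\langle z,f\rangle$ is right; as stated the lemma is absurd, and the paper's own proof makes clear that this is the intended conclusion.

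Your approach is correct in outline but genuinely different from the paper's. You attempt a \emph{global} normal-form argument: decompose $F(z)=F[z]\oplus U$, identify $F\langle x,y,z\rangle$ inside $R=F(z)*_{F}F\langle x,y\rangle$ as the span of alternating words with no $U$-factor, and then, for each reduced monomial $\alpha_{0}f^{k_{1}}\alpha_{1}\cdots f^{k_{n}}\alpha_{n}$, exhibit a ``canonical'' $R$-basis word in its expansion that carries every $\alpha_{j}$ unmerged, so that any $U$-component of any $\alpha_{j}$ is visible and uncancelled. The paper instead runs an \emph{induction on degree}: write $R$ as a left $F(z)$-combination $R=\sum_{i}\alpha_{i}\,f\,M_{i}$, use that $f$ is not a left multiple of anything in $F[z]\setminus F$ to force every $\alpha_{i}\in F[z]$, regroup as $R=\sum_{k}\gamma_{k}z^{k}fN_{k}$ with $N_{k}\in F(z)*F[f]$, use that $f$ is not a right multiple to force each $N_{k}\in F\langle x,y,z\rangle$, and finish since $\deg N_{k}<\deg R$.

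The practical difference is exactly the point you flag as the main obstacle. Your scheme needs, for every $k\geq 1$, a monomial summand of $f^{k}$ that begins \emph{and} ends in $F\langle x,y\rangle^{+}$, together with a non-cancellation argument showing that the canonical words coming from distinct reduced monomials are linearly independent in $R$. This can be pushed through, but it is where all the work lies. The paper's peeling sidesteps it entirely: at each step one only needs the left-buffer property of $f$ once (to pin down the outermost $\alpha_{i}$) and the right-buffer property of $f$ once (to push the problem into $N_{k}$), never any control on $f^{k}$ for $k>1$. So the inductive route is both shorter and more faithful to how the two halves of the hypothesis are actually consumed---one side of $f$ at a time---while your route, though workable, front-loads a delicate combinatorial independence check that the induction simply avoids.
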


\begin{proof}
Suppose $R\in F(z)*F[f]\backslash F\langle f,z\rangle =(F(z)*F[f])\backslash F$. We need to prove that
$R\in F(z)*F\langle x,y\rangle\backslash F\langle\ x,y,z\rangle$, i.e.  $R\notin  F\langle x,y,z\rangle$.

\

\noindent Let $\{M_i\}_{i\in I}$ be an arbitrary $F$-basis of $F(z)*F[f]$. As $R\in F(z)*F[f]$, it can be expressed
uniquely $$R=\sum_i \alpha_ifM_i,$$ where  $\alpha_i\in F(z)$.

\

\noindent It is easy to see that if $\alpha_k\in F(z)\backslash F[z]$ for some $k$,\ then $R\notin F\langle x,y,z\rangle$. Hence we may assume that
$\alpha_k\in F[z]$ for all $k$. Then $R$ has the  form $R=\sum_{k=0}^n\gamma_kz^kfN_k$,\ $\gamma_k\in F$. The set $\{z^k\}_{k=0}^\infty$ can be completed to a basis $\{e_i\}$ of $F(z)$ as a vector space over $F$, and every element $x$ in the coproduct $F(z)*F[f]$ can be expressed as $x=e_ifh_i$ in canonical way. In our situation $h_i\ne 0$ only for $e_i\in \{z^k\}_{k=0}^\infty$. Hence all $N_k\in F(z)*F[f]$. On the other hand, if some of $N_k\notin F\langle x,y,z\rangle$, then $fN_k\notin F\langle x,y,z\rangle$, because $f$ is not right-divisible  by any polynomial from $F[z]\backslash F$. In this situation $R=\sum_{k=0}^n\gamma_kz^kfN_k\notin F\langle x,y,z\rangle$. Hence for all $k$
$$(N_k\in F(z)*F[f])\cap(F\langle x,y,z\rangle)=F(z)*F[f]$$
and $\deg(N_k)<\deg(R)$. We conclude by induction on the degree of $R$.
\end{proof}

\noindent Let $z_l$ denote the left multiplication operator on $z$,\ $z_r$ the
right multiplication operator. An automorphism $\psi$ in $\Aut_{F[z]}F\langle x,y,z\rangle$
linear in both $x$ and $y$ has the following form: $\psi: x\to
a_{11}x+a_{12}y, y\to a_{21}x+a_{22}y$ where $a_{ij}\in
F[z_l,z_r]$. It should be pointed out, the study of such automorphisms is equivalent to
the study of invertible $2\times 2$ matrices over the polynomial ring of
two commuting variables $z_l$ and $z_r$ over a field.

\begin{lemma}         \label{noncommcon}
Let  $\psi: x\to a_{11}x+a_{12}y, y\to
a_{21}x+a_{22}y$ be a linear automorphism of $F(z_l,z_r)\langle x,y\rangle$ where $a_{ij}\in F(z_l,z_r)$ and let
$\varphi: x\to x, y+Q(x)$ be an elementary $z$-automorphism
of $F\langle x,y,z\rangle$. Suppose
$\phi=\psi\circ\varphi\circ\psi^{-1}\in\Aut_{F[z]}F\langle x,y,z\rangle$. Then it has
the following form
$$
\phi:\ x\to x+ bh(ax+by),\ y\to y-ah(ax+by);\quad a, b
\in F[z_l,\ z_r],
$$
where $h(t)\in F\langle z,t\rangle$.
\end{lemma}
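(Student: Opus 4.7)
The plan is to first compute $\phi = \psi \circ \varphi \circ \psi^{-1}$ explicitly, then clear denominators in the rational coefficients $a_{ij}$ to produce polynomial $a, b \in F[z_l, z_r]$, and finally to use Lemma~\ref{LmCoprdPrdIntsct} to force the resulting $h$ to have $F$-coefficients.

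Concretely, writing the entries of $A^{-1}$ as $\alpha, \beta, \gamma, \delta$ with $\beta = -a_{12}/\det A$ and $\delta = a_{11}/\det A$, and setting $u := \psi(x) = a_{11}x + a_{12}y$ (which $\phi$ fixes because $\varphi$ fixes $x$), a direct application of $\psi, \varphi, \psi^{-1}$ in turn gives $\phi(x) = x + \beta Q(u)$ and $\phi(y) = y + \delta Q(u)$, where everything lives a priori in the coproduct $F(z_l, z_r) * F\langle x, y\rangle$.

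Next, to produce polynomial $a, b$, I would exploit that $F[z_l, z_r]$ is a UFD: write $a_{11} = p_1/q_1$ and $a_{12} = p_2/q_2$ in lowest terms, set $d = \gcd(p_1q_2, p_2q_1)$, and define $a := p_1q_2/d$, $b := p_2q_1/d$, $\lambda := d/(q_1q_2)$, and $\mu := -\lambda/\det A$. Then $a, b \in F[z_l, z_r]$ are coprime, $u = \lambda(ax+by)$, and setting $h(t) := \mu Q(\lambda t)$ one verifies $bh(ax+by) = \beta Q(u) = \phi(x) - x$ and $-ah(ax+by) = \delta Q(u) = \phi(y) - y$ by commuting the $F(z_l, z_r)$-scalars. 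Degenerate cases where $a_{11}$ or $a_{12}$ vanishes are handled directly.

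The main obstacle is showing that $h$ actually has $F$-coefficients, i.e., lies in $F\langle z, t\rangle$, since a priori $h$ sits in $F(z_l, z_r)\langle t\rangle$. Here the hypothesis $\phi \in \Aut_{F[z]}F\langle x, y, z\rangle$ is essential: both $bh(ax+by)$ and $ah(ax+by)$ lie in $F\langle x, y, z\rangle$. I would apply Lemma~\ref{LmCoprdPrdIntsct} to $f := ax+by$. First, one checks $f$ is neither a left nor a right multiple of any nonconstant $p(z) \in F[z]$: if $ax+by = p(z)g$, then matching the $x$- and $y$-linear parts forces $p(z_l) \mid a$ and $p(z_l) \mid b$ in $F[z_l, z_r]$, contradicting $\gcd(a,b)=1$, and symmetrically on the right. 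The lemma then places $bh(f)$ and $ah(f)$ inside $F\langle z, f\rangle$, and combining these two facts using the coprimality of $a$ and $b$ yields $h(f) \in F\langle z, f\rangle$; transferring along the isomorphism $F\langle z, f\rangle \cong F\langle z, t\rangle$ gives $h \in F\langle z, t\rangle$. This final peeling-off of the outer $F[z_l, z_r]$-factor from $bh(f)$ and $ah(f)$ simultaneously, using coprimality of $a, b$ in a UFD that is not a PID, is the hardest step.
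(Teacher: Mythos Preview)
Your approach is essentially the same as the paper's: compute $\phi$ explicitly, clear denominators to obtain coprime $a,b\in F[z_l,z_r]$, and then combine Lemma~\ref{LmCoprdPrdIntsct} with the coprimality of $a,b$ to force $h\in F\langle z,t\rangle$. The only noteworthy difference is the order of the last two steps: the paper first uses coprimality of $a,b$ to deduce that $Q((ax+by)/\alpha)$ itself lies in $F\langle x,y,z\rangle$, and then applies Lemma~\ref{LmCoprdPrdIntsct} once to that element, whereas you apply the lemma to $ah(f)$ and $bh(f)$ separately and then peel off $a,b$ inside $F\langle z,f\rangle$. Both routes hinge on the same delicate point you flag---that $\gcd(a,b)=1$ in the non-PID UFD $F[z_l,z_r]$ suffices to cancel the outer factor---and the paper handles it just as briefly as you do. (Your use of the first row $a_{11},a_{12}$ rather than the paper's $a_{21},a_{22}$ is the correct one given the stated $\varphi$; this looks like a typo in the paper.)
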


\begin{proof}
Let $\alpha\in F[z_l,\ z_r]$ be the least common multiple of the
denominators of $a_{21}, a_{22}$, $a=a_{21}\alpha,
b=a_{22}\alpha$. Then
$$
\phi:\ x\to x+ bQ((ax+by)/\alpha),\ y\to y-aQ((ax+by)/\alpha);$$
where $\quad a,\ b\in F[z_l,\ z_r]$.

\

\noindent As $a, b$ are relatively prime,\ $Q((ax+by)/\alpha)\in F\langle x,y,z\rangle$. Hence by Lemma \ref{LmCoprdPrdIntsct}, the coefficients of $Q((ax+by)/\alpha)\in F\langle z,ax+by\rangle$.
Therefore $Q((ax+by)/\alpha)$ must  have the  form $h(ax+by)$ for some $h(t)\in F\langle z,t\rangle$.
\end{proof}

\begin{lemma}         \label{conjstablytame}
Let $\psi$ be a $z$-automorphism of $F\langle x,y,z\rangle$
in the form
$$
x\to x+ bh(ax+by),\ y\to y-ah(ax+by);\quad a, b \in
F[z_l,z_r]
$$
for some polynomial $h(t)\in F\langle z,t\rangle$.
Then it is stably tame and becomes tame after adding one variable.
\end{lemma}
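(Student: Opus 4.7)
The plan is to realize the stable extension $(\psi,t)$ of $\psi$ as an explicit short product of elementary automorphisms of $F\langle x,y,z,t\rangle$, exploiting the commutativity $ab=ba$ of the operators $a,b\in F[z_l,z_r]$. Write $u:=ax+by$ and introduce three elementary automorphisms of $F\langle x,y,z,t\rangle$, each fixing the remaining three generators:
\[
E_x\colon x\mapsto x+bt,\qquad E_y\colon y\mapsto y-at,\qquad R\colon t\mapsto t+h(u).
\]
These are genuinely elementary: $bt,at\in F\langle z,t\rangle$ and $h(u)=h(ax+by)\in F\langle x,y,z\rangle$, so in each case the addend lies in the subalgebra generated by the other three generators.

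Next I would set $E^{+}:=E_x\circ E_y$, already a product of two elementary automorphisms. The key observation is that, because $a$ and $b$ commute as elements of $F[z_l,z_r]$,
\[
E^{+}(u)\;=\;a(x+bt)+b(y-at)\;=\;ax+by+(ab-ba)t\;=\;u,
\]
so $u$, and hence $h(u)$, is fixed by $E^{+}$ and by $(E^{+})^{-1}$. A direct substitution then yields, for the conjugate $\Phi:=(E^{+})^{-1}\circ R\circ E^{+}$,
\[
\Phi(x)=x+b\,h(u),\quad \Phi(y)=y-a\,h(u),\quad \Phi(z)=z,\quad \Phi(t)=t+h(u).
\]
Thus $\Phi$ already agrees with $(\psi,t)$ on $x,y,z$ and differs from it only by a shift on $t$.

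Finally, I would kill that shift by composing on the left with the elementary map $R^{-1}\colon t\mapsto t-h(u)$, giving
\[
(\psi,t)\;=\;R^{-1}\circ(E^{+})^{-1}\circ R\circ E^{+},
\]
which is a product of at most six elementary automorphisms of $F\langle x,y,z,t\rangle$. This exhibits $(\psi,t)$ as tame, showing that $\psi$ itself is stably tame and that adjoining a single variable suffices. There is essentially no serious obstacle here; the only point that needs care is the identity $E^{+}(u)=u$, and this is immediate from the commutativity of $F[z_l,z_r]$, which is precisely the structural input secured by Lemmas \ref{LmCoprdPrdIntsct} and \ref{noncommcon}.
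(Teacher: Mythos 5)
Your decomposition $(\psi,t)=R^{-1}\circ(E^{+})^{-1}\circ R\circ E^{+}$ is exactly the paper's proof (the Smith-type conjugation trick), written out with the same four elementary factors and the same key identity $E^{+}(ax+by)=ax+by$. One small correction to your closing remark: the commutativity $ab=ba$ comes for free from the fact that left and right multiplication by $z$ commute (so $F[z_l,z_r]$ is a commutative polynomial ring), not from Lemmas \ref{LmCoprdPrdIntsct} and \ref{noncommcon}, whose role is only to put $\psi$ into the stated normal form in the first place.
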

\begin{proof}
Based on the method of Smith \cite{S},
$$
(x+bh(ax+by),y-ah(ax+by),t)=$$
$$(x,y,t-h(ax+by))(x-bt,y+at,t)(x,y,z, t+h(ax+by))(x+bt,y-at,t).$$
\end{proof}

\begin{remark}
The general Anick type automorphisms  (see \cite{DY1, DY2})
$$(x+zg(xz-zy, z),
y+g(xz-zy, z)z),$$ in $\Aut_{F[z]}F\langle x,\ y,\ z\rangle$,
with an arbitrary polynimial $g(t,\ s)\in F\langle t,\ s\rangle$,
are obviously covered by Lemma \ref{conjstablytame}.
\end{remark}

\noindent {\bf Proof of Theorem \ref{stablytame}.}
First, an automorpphism of Type i) in Theorem \ref{coproduct} is stably tame and becomes tame after adding one variable,
according to \cite{DY}.

\

\noindent Now, suppose we have an automorphism of  Type ii) in Theorem \ref{coproduct},
we are done by Lemma \ref{conjstablytame}.\qed

\

\section{\bf Acknowledgements}

\

\noindent Jie-Tai Yu would like to thank David Wright for sending him
an early version of \cite{BEW} in July 2007.
The authors also thank Vesselin Drensky and Leonid Makar-Limanov for
comments and remarks.

\

\end{document}